\tikzset{tail reversed/.code={\pgfsetarrowsstart{tikzcd to}}}
\newcolumntype{C}{>{\centering\arraybackslash$} p{1em} <{$}}
\newcommand{\Z}{\mathbb{Z}}
\newcommand{\CC}{\mathbb{C}}
\newcommand{\GL}{\textnormal{GL}}
\newcommand{\so}{\mathfrak{so}}
\newcommand{\SO}{\textnormal{SO}}
\newcommand{\Sp}{\textnormal{Sp}}
\newcommand{\circled}[1]{\raisebox{.5pt}{\textcircled{\raisebox{-.9pt} {#1}}}}
\newcommand{\A}{\mathcal{A}}
\newcommand{\shortoverline}{\mathpalette\@thickbar}
\newcommand{\@thickbar}[2]{{#1\mkern0mu\vbox{
    \sbox\z@{$#1#2\mkern-1.5mu$}%
    \dimen@=\dimexpr\ht\tw@-\ht\z@+9\p@\relax 
    \hrule\@height0.6\p@ 
    \vskip\dimen@
    \box\z@}}
}
\renewcommand{\arraystretch}{1.5}
\def\th@plain{%
  \thm@notefont{}
  \itshape 
}
\def\th@definition{%
  \thm@notefont{}
  \normalfont 
}
\theoremstyle{definition}
\newtheorem{de}[equation]{Definition}
\newtheorem{ejemplo}[equation]{Example}
\newtheorem{nota}[equation]{Note}
\theoremstyle{plain}
\newtheorem{lem}[equation]{Lemma}
\newtheorem{cor}[equation]{Corollary}
\newtheorem{prop}[equation]{Proposition}
\newlength{\mylength}
\newenvironment{ej}{
    \small
    \parshape 1 \parindent \mylength
    \begin{ejemplo}
    }{
    \end{ejemplo}
}
\newenvironment{note}{
    \small
    \parshape 1 \parindent \mylength
    \begin{nota}
    }{
    \end{nota}
}
\DeclareMathOperator{\Par}{Par}
\DeclareMathOperator{\Irr}{Irr}
\DeclareMathOperator{\rect}{rect}
    \newcommand{\yTableau}{\ytableausetup{boxsize=1.5em, aligntableaux=center}\scriptsize}
    \newcommand{\yTableauTwo}{\ytableausetup{boxsize=1.3em, aligntableaux=center}\scriptsize}
\title{Bender--Knuth involutions for types B and C}
\author{
\'{A}lvaro Guti\'{e}rrez\footnote{\href{a.gutierrezcaceres@bristol.ac.uk}{a.gutierrezcaceres@bristol.ac.uk} \\ Department of Mathematics, University of Bonn, Bonn \\ School of Mathematics, University of Bristol, Bristol \\ This project started as part of a MSc thesis project in the University of Bonn and was partially funded by the University of Bristol Research Training Support Grant.}}
\date{May 28, 2024}
\begin{document}
\numberwithin{equation}{section}
\maketitle

\begin{abstract}
    We show that the combinatorial definitions of King and Sundaram of the symmetric polynomials of types B and C are indeed symmetric, in the sense that they are invariant by the action of the Weyl groups. Our proof is combinatorial and inspired by Bender and Knuth's classic involutions for type A.
\end{abstract}
\tableofcontents

\section{Introduction}
Symmetric polynomials are a central object of study in two branches of mathematics. On the one side, for combinatorialists, they are generating functions. On the other side, for representation theorists, they are characters of representations. The interplay between these two points of view is best presented with a diagram (see Figure \ref{fig: survey}).

\begin{figure}[h]
\vspace{-1em}
\[\small
\begin{tikzcd}[row sep=0em]
	&&& {\substack{\text{Jacobi--Trudi-type}\\\text{formulas}}} \\
	{\substack{\text{Generating functions}\\\text{of tableaux}}} \\
	&&&&& {\substack{\text{Weyl's character}\\\text{formula}}} \\
	&& {\substack{\text{Weight functions}\\\text{of crystals}}}
	\arrow["{\substack{\text{Crystal theory}\\\,\\\,}}"{marking, allow upside down}, tail reversed, from=4-3, to=3-6]
	\arrow["{\substack{\,\\\,\\\,\\\text{Jeu de taquin}}}"{marking, allow upside down}, tail reversed, from=2-1, to=4-3]
	\arrow["{\substack{\text{Path enumeration}\\\,\\\,}}"{marking, allow upside down}, tail reversed, from=2-1, to=1-4]
	\arrow[tail reversed, dashed, from=1-4, to=3-6]
    \arrow[tail reversed, dashed, from=2-1, to=3-6]
\end{tikzcd}\]
\vspace{-2em}
    \caption{Different approaches to the topic in the literature.}
    \label{fig: survey}
\end{figure}

In type A, corresponding to the representation theory of $\GL_n$ and its combinatorics, the objects and relations in Figure \ref{fig: survey} are well understood.
\emph{Schur (symmetric) polynomials} arise as the generating functions of semistandard Young tableaux and as the (Weyl) characters of irreducible polynomial representations of $\GL_n$. They are also the generating functions of Gelfand--Tsetlin patterns \cite{StanleyEC2}. For other Lie groups such as $\SO_{2n+1}$ (type B) and $\Sp_{2n}$ (type C), we have different candidates for tableaux and patterns whose generating functions are the \emph{orthogonal (Schur) polynomials} and \emph{symplectic (Schur) polynomials}. In this work, we focus on the tableau defined by King for type C \cite{King} and Sundaram for type B \cite{SundaramOrthogonal}, who show that their generating functions verify some recursive algebraic formulas and deduce that they recover the irreducible characters of the corresponding Lie groups \cite{Sundaram, SundaramOrthogonal}.

Characters can be computed as the determinants of matrices whose entries are elementary symmetric polynomials. For type A this is the Jacobi--Trudi formula \cite{StanleyEC2}; for types B and C, see \cite[Lec.~24 and App.~A]{FH91}, \cite{KT87}. These determinants enumerate tableaux after an argument of Gessel and Viennot for type A \cite{GV}, \cite[Sec.~7.16]{StanleyEC2}; see \cite{FK97} for the type B and C analogues. 

On the other hand, crystal bases allow Kashiwara and Nakashima to propose their own tableaux definitions \cite{KN}. For type A, these recover the above combinatorics; for type C, they are seen to be in bijection with King tableaux in \cite{Sheats}, via an analogue of the jeu de taquin algorithm. We are not aware of analogous bijections for type B in the literature.\\

Without direct proofs, it is not immediately obvious that generating functions of tableaux are the correct candidates for describing characters of representations.
In particular, we ask whether these generating functions are invariant by the action of the Weyl group of corresponding type; $W(A_{n-1})\cong S_n$ and $W(B_n) \cong S_2\wr S_n \cong W(C_n)$. This is certainly true of Weyl characters, but we seek a direct and combinatorial proof.

For type A, a short argument by Bender and Knuth \cite{BenderKnuth}, \cite[Thm.~7.10.2]{StanleyEC2} shows that the number of semistandard Young tableaux of a fixed shape $\lambda$ with weight $x^\alpha$ coincides with the number of semistandard Young tableaux of shape $\lambda$ with weight $(j~j\!+\!1).x^\alpha$ for any simple transposition $(j~j\!+\!1)$, $j =1, ..., n-1$. This is done by constructing an involution $\smash{BK_j^\text{A}}$ which is known as a \emph{(type A) Bender--Knuth involution}. Therefore, the Schur polynomial of shape $\lambda$ is a \emph{symmetric} polynomial; an element of $\smash{\CC[x_1, ..., x_n]^{S_n}}$.
As a remark, these involutions do \emph{not} induce an action of $S_n$ on the set of tableaux of fixed shape, in general. Type A Bender--Knuth involutions are translated to Gelfand--Tsetlin patterns in \cite{BK95}; we review these constructions later in this text.

We introduce \emph{type C Bender--Knuth involutions} $BK^\text{C}_j$ and show combinatorially and directly for the first time that there is an involutory action of $S_2\wr S_n$ on the set of King patterns of top row $\lambda$. We conclude that symplectic polynomials lie in $\CC[x_1^\pm, ..., x_n^\pm]^{S_2\wr S_n}$. As a corollary, we get \emph{type B Bender--Knuth involutions} and the analogous result for orthogonal polynomials. The maps are later translated to tableaux. As expected, our involutions do not define an action of $S_2\wr S_n$ on the sets of patterns of a fixed shape (see \cite[Note 4.11]{GutiTFM}).

These results were first claimed by Sundaram in \cite{Sundaram}. However, as noted by Hopkins \cite{MathOverflow}, the original proof is incorrect and cannot be fixed in any simple way. (The claimed proof only shows that the functions are invariant by a set of elements of $S_2\wr S_n$ which do not form a generating set for the group.) A corollary of our result, also noted by Sundaram \cite{Sundaram, SundaramOrthogonal}, is that the symplectic and orthogonal polynomials define class functions on the set of diagonalizable elements in the algebraic groups of types B and C.\\

A first candidate for type C Bender--Knuth involutions is given as a composition of type A Bender--Knuth involutions. The resulting patterns may not be symplectic, so we post-compose with a \emph{rectification} map.
A `locality' argument allows us to reduce our proof to computing that $BK^\text{C}_2$ is an involution on a generic pattern when $n = 3$. This reduction is what enables us to conclude the result.

It is worth remarking that there are two ways of approaching this computation on generic patterns when $n=3$. One is to argue directly and `by hand', as we do. Alternatively, one could use a computer to check that $BK^\text{C}_2$ (as a tropical rational map) is involutory. This appears to be beyond the reach of computer algebra systems at the time of writing. 
But we were able to check that $\text{Trop}^{-1} BK^\text{C}_2$ (as a rational map) is involutory. It remains to argue that the order of $BK^\text{C}_2$ and the order of $\text{Trop}^{-1} BK^\text{C}_2$ coincide. This step is in general non-trivial (see e.g. \cite{grinbergRoby14}).\\

We recall some preliminary definitions in Section \ref{sec: preliminary}. We define tableaux and patterns for types A, B, and C in Section \ref{sec: tableaux}, and we define symmetric polynomials as their generating functions. We recall type A Bender--Knuth involutions and introduce the type B and C analogues in Section \ref{sec: bk}. Proving that these are involutions reduces to a computation, that we leave for Section \ref{sec: appendix bender-knuth}.

\section{Preliminary definitions}
\label{sec: preliminary}

Fix a natural number $n\ge 1$ throughout this work. We work over $\CC$. We will follow \cite[Ch. 7]{StanleyEC2} for the standard concepts on symmetric polynomials, and \cite{FH91} for Lie theory.

\subsection{Symmetric polynomials and partitions}
 
The \emph{space of symmetric polynomials} in $n$ variables is $\Lambda_n = \CC[x_1, ..., x_n]^{S_n}$, where the symmetric group $S_n$ acts by permuting the variables.
A \emph{partition} $\lambda$ of length smaller or equal to $n$ is an $n$-tuple of weakly decreasing non-negative integers. Let $\Par_n$ be the set of partitions of length smaller or equal to $n$.
Bases of $\Lambda_n$ are indexed by $\Par_n$.
We represent partitions through their \emph{Young diagram}, which we draw following the English convention.

\subsection{Lie groups and Weyl groups}

The set $\Irr(\GL_{n})$ of irreducible polynomial representations of $\GL_{n}$ is indexed by $\Par_{n}$.
The relationship between $\Irr(\GL_n)$ and $\Lambda_{n}$ is explained through the following result: the (Weyl) characters of irreducible polynomial representations of $\GL_{n}$ form a basis of $\Lambda_{n}$. The character of the irreducible representation indexed by $\lambda$ is the Schur polynomial $s_\lambda(x_1, ..., x_n)$, as defined combinatorially in Section \ref{sec: sym pols}.

The sets $\Irr(\Sp_{2n})$ and $\Irr(\SO_{2n+1})$ are also indexed by $\Par_n$. (The set of irreducible representations of the Lie algebra $\so(2n+1)$ is richer, and indexed by the set of partitions and \emph{half-partitions}. The representations indexed by half-partitions are called spin representations, and will not be modelled by the combinatorics of this document.)
The irreducible characters for $\SO_{2n+1}$ and $\Sp_{2n}$ are known as the \emph{symmetric polynomials of types B and C}. These will be defined purely combinatorially in Section \ref{sec: sym pols}, and referred to as \emph{orthogonal polynomials} and \emph{symplectic polynomials}, respectively. Note that these are not in $\Lambda_n$. Rather, they lie in the ring $\CC[x_1^\pm, ..., x_n^\pm]^W$ of Laurent polynomials invariant under the Weyl group of corresponding type (as a permutation group of the variables). In type A, these Laurent polynomials are polynomials, and the Weyl group of $\GL_{n}$ is $W(A_{n-1}) \cong S_{n}$; this is consistent with the above.
The Weyl groups of type B and C coincide, and are isomorphic to the wreath product $S_2\wr S_n$. We avoid wreath products in this work, and instead provide the following description of these Weyl groups as subgroups of $S_{2n}$: if we interpret $S_{2n}$ as the permutation group of the set $\{1, \bar{1}, 2, \bar{2}, ..., n, \bar{n}\}$, then $W(B_n)$ and $W(C_n)$ are isomorphic to the subgroup of $S_{2n}$ generated by $(1\ \bar{1})$ and the permutations $(j~~j\!+\!1)(\shortoverline{j}~~\shortoverline{j\!+\!1})$ for $j\in [n-1]$.

\section{Combinatorics and symmetric functions}
\label{sec: tableaux}

\subsection{Tableaux for types A, B, and C}

\newcommand{\X}{\mathcal{X}}
\newcommand{\Y}{\mathcal{Y}}
Fix $\lambda\in\Par_n$. Let $[\lambda] := \{(i,j)\in[n]\times\Z_{>0} : j \le \lambda_i\}$ be the set of its cells.
Let $\X$ be a totally ordered set. A tableau of shape $\lambda$ in the alphabet $\X$ is a function $T:[\lambda]\to \X$.
We say a tableau is \emph{semistandard} if $T(i,j)<T(i+1,j)$ and $T(i,j)\le T(i,j+1)$ whenever this makes sense.

The (set-wise) co-restriction of a map $f : A\to B$ to a subset $C\subset B$ is defined to be the restriction of $f$ to $f^{-1}(C)$.
    \newcommand{\SSYT}{\textnormal{SSYT}}
    \newcommand{\SOT}{\textnormal{SOT}}
    \newcommand{\KSpT}{\textnormal{KSpT}}
\begin{de}\label{def: symplectic tableaux}
Let $\A := \{1<\bar{1}<2<\bar{2}<\cdots<n<\bar{n}\}$, and $\A_\infty := \{1<\bar{1}<2<\bar{2}<\cdots<n<\bar{n}<\infty\}$ be two ordered sets.

\begin{itemize}
    \item[(A)] A \emph{semistandard Young tableaux} (on $n$ letters) of shape $\lambda$ is a semistandard tableau of shape $\lambda$ in the alphabet $[n] = \{1 < 2 < \cdots < n\}$.
    \item[(B)] A \emph{(Sundaram) orthogonal tableau} 
    $T$ (on $n$ letters) of shape $\lambda$ is a tableau of shape $\lambda$ in the alphabet $\A_\infty$ such that
    \begin{itemize}
        \item the co-restriction of $T$ to $\A$ defines a symplectic tableau (see below), and
        \item at most one cell per row (the right-most cell) takes the value $\infty$.
    \end{itemize}
    \item[(C)] A \emph{(King) symplectic tableau} $T$ (on $n$ letters) of shape $\lambda$ is a semistandard tableau of shape $\lambda$ in the alphabet $\A$ such that $T(i,j)\ge i$ for all $(i, j) \in [\lambda]$.
\end{itemize}
We let $\SSYT_n(\lambda)$, $\SOT_n(\lambda)$, and $\KSpT_n(\lambda)$ denote the sets of such tableaux.

The weight of a tableau $T : [\lambda] \to \X$ is the monomial
    \(
    x^T = \prod_{a\in \X} x_{T^{-1}(a)}
    \) 
    in the ring
    \(
    \CC[x_i : i\in\X]
    \).
We take the conventions $\smash{x^{\ }_{\bar{\imath}} = x_i^{-1}}$ and $x_\infty = 1$. That is, weights of 
semistandard Young tableaux are monomials in $\CC[x_1, ..., x_n]$, whereas weights of orthogonal and symplectic tableaux lie in
\(
\CC[x_1^{\pm}, ..., x_n^{\pm}] .
\)
\end{de}

\begin{ej} We present a semistandard Young tableau, an orthogonal tableau, and a symplectic tableau of shape $(3, 3, 2)$ and their weights.
\[
\arraycolsep2em
\begin{array}{ccc}
{\yTableau
\ytableaushort{113,234,34}}\quad x_1^2x_2x_3^3x_4^2 &
{\yTableau\ytableaushort{12{\infty},33{\infty},{\bar{3}}{\bar{3}}}}\quad x_1x_2 &
{\yTableau\ytableaushort{12{\bar{2}},33{\bar{3}},{\bar{3}}{\bar{3}}}}\quad x_1x_3^{-1}
\end{array}
\]
\end{ej}

\begin{note}\label{note: type B, C as type A}
    Co-restriction to $\A$ defines a bijection $\SOT_n(\lambda)\to\bigcup_\mu \KSpT_n(\mu)$, where $\mu$ ranges over the partitions which may be formed from $[\lambda]$ by removing at most one cell per row. 
    The order-preserving map $\A\to[2n]$ gives an inclusion $\KSpT_n(\lambda)\subseteq\SSYT_{2n}(\lambda)$.
\end{note}

\subsection{Patterns for types A, B, and C}\label{sec: patterns}

\newcommand{\GT}{\textnormal{GT}}

\begin{de}
    A \emph{Gelfand--Tsetlin pattern} (or GT pattern) with $n$ rows is a triangular array of non-negative integers $P = (P^{(n)}, ..., P^{(1)})$, with $P^{(k)} = (p_{1k}, ..., p_{kk})$ for $k \in[n]$, subject to the local inequalities of Figure \ref{fig:GTpattern}.
    We say $P^{(k)}$ for $k\in[n]$ are the rows of $P$ and we call $P^{(n)}$ the \emph{top row}. Note that $P^{(k)} \in \Par_k$ for each $k$. 
    Let $\GT_n(\lambda)$ be the set of GT patterns with $n$ rows and top row $\lambda$.
\end{de}
\begin{figure}[h]
    \centering\vspace{-2em}
    \[
    \arraycolsep=0.2em\def\arraystretch{1}
    \begin{array}{CCCCCCC}
    p_{14} & & p_{24} & & p_{34} & & p_{44} \\
    & p_{13} & & p_{23} & & p_{33} & \\
    & & p_{12} & & p_{22} & & \\
    & & & p_{11} & & &
    \end{array}
    \quad
    \quad
    \begin{tikzpicture}[baseline=0, y=1.7em]
        \node (ij) at (0,0) {$p_{i,j}$};
        \node (ij+1) at (-1,1) {$p_{i,j+1}$};
        \node (i+1j+1) at (1,1) {$p_{i+1,j+1}$};
        \node (i-1j-1) at (-1,-1) {$p_{i-1,j-1}$};
        \node (ij-1) at (1,-1) {$p_{i,j-1}$};
        
        \draw[white] (ij+1) -- (ij) node[midway, black, rotate=-45]{$\ge$};
        \draw[white] (i+1j+1) -- (ij) node[midway, black, rotate=45]{$\ge$};
        \draw[white] (i-1j-1) -- (ij) node[midway, black, rotate=45]{$\ge$};
        \draw[white] (ij-1) -- (ij) node[midway, black, rotate=-45]{$\ge$};
    \end{tikzpicture} \vspace{-1em}
    \]
    \caption{Left: the arrangement of a GT pattern of size 4. Right: the local inequalities.}
    \label{fig:GTpattern}
\end{figure}

We have a bijection $\SSYT_n(\lambda)\to\GT_n(\lambda)$, by letting $P^{(k)}$ be the shape of $T^{-1}[k]$ (see \cite[Sec.~7.10]{StanleyEC2}). In other words, $p_{i,j}$ counts the number of entries smaller or equal to $j$ in the $i$th row of $T$. See Example \ref{eg: patterns}.

Trough this bijection, the $j$th row sum $S_j := \sum_i p_{ij}$ of a pattern $P$ counts the number of entries smaller or equal to $j$ in the corresponding tableau. Therefore,
if the weight of a pattern $P$ is defined as the monomial $x^P := x_1^{S_1}x_2^{S_1-S_2}\cdots x_n^{S_n-S_{n-1}}$, the bijection is weight-preserving.

\newcommand{\KSpP}{\textnormal{KSpP}}
\newcommand{\SOP}{\textnormal{SOP}}
\begin{de}
A \emph{(King) symplectic pattern} is a Gelfand--Tsetlin pattern $P$ in which $p_{ij} = 0$ whenever $2i > j$ (see \cite{King}). We let $\KSpP_n(\lambda)$ be the set of symplectic patterns with $2n$ rows and top row $\lambda$. 
\end{de}
We think of these as ``half-triangular'' arrays by omitting the entries $p_{ij}$ with $2i>j$. See Example \ref{eg: patterns}.
We introduce the following definition.
\begin{de}
A  \emph{(Sundaram) orthogonal pattern} is a symplectic pattern in which top row entries might be circled. Let $P$ be an orthogonal pattern with $N$ rows. 
The shape $\lambda$ of $P$ is defined by $\lambda_i := p_{iN}+1$ if $p_{iN}$ is circled and $\lambda_i := p_{iN}$ otherwise.
For a partition $\lambda$, we let $\SOP_n(\lambda)$ be the set of orthogonal patterns with $2n$ rows and shape $\lambda$.
\end{de}

The maps from Note \ref{note: type B, C as type A} together with the bijection $\SSYT_n(\lambda)\leftrightarrow\GT_n(\lambda)$ give
maps $\KSpT_n(\lambda) \leftrightarrow \KSpP_n(\lambda) \subseteq \GT_{2n}(\lambda)$ and $\SOT_n(\lambda) \leftrightarrow \SOP_n(\lambda) \leftrightarrow \bigcup_\mu \KSpP_n(\mu)$.

Trough these bijections, given a pattern $P$ of type B or C, the difference $S_{2j-1} - S_{2j-2}$ counts the  number of entries equal to $j$ in the corresponding tableau, whereas $S_{2j} - S_{2j-1}$ counts the number of entries equal to $\bar{\jmath}$. Therefore, if the weight of a pattern $P$ is defined as the monomial $x^P := x_1^{2S_1-S_2}x_2^{2S_3-S_4-S_2}\cdots x_n^{2S_{2n-1}-S_{2n}-S_{2n-2}}$, the bijections are weight-preserving.

\begin{ej}
\label{eg: patterns}
We present patterns of top row (or shape) $\lambda = (3,2)$ for types A, B, and C.
\[
\begin{tabularx}{\textwidth}{>{\centering\arraybackslash\hsize=.27\hsize}X>{\centering\arraybackslash\hsize=.27\hsize}X>{\centering\arraybackslash\hsize=.27\hsize}X}
    {\yTableau\ytableaushort{113,23}}
    $\,\leftrightarrow\,$
    \arraycolsep=0.1em\def\arraystretch{1}
    \scriptsize
    $\begin{array}{CCCCC}
    3 & & 2 & & 0\\
    & 2 & & 1 &\\
    & & 2 & &
    \end{array}$ \quad&\quad
    {\yTableau\ytableaushort{1{\bar{1}}{\infty},{2}{\bar{2}}}}
    $\,\leftrightarrow\,$
    \arraycolsep=0.1em\def\arraystretch{1}
    \scriptsize
    $\begin{array}{CCCC|}
    \circled{2} &   & 2 &\\
      & 2 &   & 1 \\
      &   & 2 & \\
      &   &   & 1 
    \end{array}$ \quad&\quad
    {\yTableau\ytableaushort{1{\bar{1}}2,2{\bar{2}}}}
    $\,\leftrightarrow\,$
    \arraycolsep=0.1em\def\arraystretch{1}
    \scriptsize
    $\begin{array}{CCCC|}
    3 &   & 2 &\\
      & 3 &   & 1 \\
      &   & 2 & \\
      &   &   & 1 
    \end{array}$ 
    \end{tabularx}
\]
\end{ej}

\subsection{Symmetric polynomials as generating functions}
\label{sec: sym pols}

\begin{de}
Let $\lambda\in\Par_n$. The \emph{Schur polynomial} $s_\lambda$, the \emph{orthogonal polynomial} $o_\lambda$, and the \emph{symplectic polynomial} $sp_\lambda$ on $n$ letters and of shape $\lambda$ are defined as the generating functions of semistandard Young tableaux, orthogonal tableaux, and symplectic tableaux on $n$ letters and of shape $\lambda$, respectively. Explicitly, 
\[
s_\lambda(x_1, ..., x_n) =\!\!\!\!\! \sum_{T\in\SSYT_n(\lambda)}\!\!\!\!\! x^T,
\quad
o_\lambda(x_1, ..., x_n) =\!\!\!\!\! \sum_{T\in\SOT_n(\lambda)}\!\!\!\!\! x^T,
\quad\text{and}\quad
sp_\lambda(x_1, ..., x_n) =\!\!\!\!\! \sum_{T\in\KSpT_n(\lambda)}\!\!\!\!\! x^T.
\]
Equivalently, they are the generating functions of $\GT_n(\lambda)$, $\SOP_n(\lambda)$, and $\KSpP_n(\lambda)$.
\end{de}

\section{Bender--Knuth involutions}
\label{sec: bk}

We study type B and C analogues of the following elegant proof of Bender and Knuth.

\begin{prop}\label{lem: bender-knuth A}
    Schur polynomials on $n$ letters are $W(A_{n-1})$-symmetric.
\end{prop}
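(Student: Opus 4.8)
The plan is to construct, for each simple transposition $(j~~j\!+\!1) \in W(A_{n-1}) \cong S_n$, an explicit involution $BK_j^{\text{A}}$ on $\SSYT_n(\lambda)$ that sends a tableau of weight $x^\alpha$ to a tableau of weight $(j~~j\!+\!1).x^\alpha$, where the latter means swapping the exponents of $x_j$ and $x_{j+1}$. Since the simple transpositions generate $S_n$, the existence of such weight-swapping involutions immediately forces the generating function $s_\lambda = \sum_{T} x^T$ to be invariant under every element of $S_n$, hence to lie in $\Lambda_n = \CC[x_1, \dots, x_n]^{S_n}$.

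To build $BK_j^{\text{A}}$, I would argue that it suffices to toggle only the entries equal to $j$ or $j+1$ in a fixed tableau $T$, leaving all other entries untouched. First I would observe that in any column, the entries $j$ and $j+1$ behave rigidly: if a cell contains $j$ and the cell directly below contains $j+1$, then by semistandardness these two values are ``locked'' together and cannot be altered without violating the strict column inequalities. So within each row I would isolate the \emph{free} occurrences: the $j$'s not sitting directly above a $j+1$, and the $j+1$'s not sitting directly below a $j$. A short argument shows that in each row these free entries form a contiguous block of the form $j^a (j\!+\!1)^b$ (some $a$ copies of $j$ followed by some $b$ copies of $j+1$), because the weakly increasing condition along rows keeps all $j$'s left of all $j+1$'s. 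The involution then replaces this block by $j^b (j\!+\!1)^a$ in every row independently.

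The key steps, carried out in order, are: (1) verify that after this replacement the result is still semistandard --- the row condition is automatic since each row still reads $j^{b'}(j\!+\!1)^{a'}$ in its free block, and the column conditions need checking only at the boundaries of the free blocks, where the locked pairs guarantee no violation; (2) verify that the map squares to the identity, which is clear because applying the block-swap $j^a(j\!+\!1)^b \mapsto j^b(j\!+\!1)^a$ twice returns the original; and (3) verify that swapping $a$ copies of $j$ for $b$ copies of $j+1$ (and vice versa) in each row exactly interchanges the total multiplicities of $j$ and $j+1$ across the whole tableau, which is precisely the action of $(j~~j\!+\!1)$ on the weight monomial.

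The main obstacle, and the only place requiring genuine care, is step (1): confirming that the free $j$'s and $j+1$'s really do form a single contiguous run in each row and that the toggle never breaks a column inequality. The crux is the locking observation --- that a $j$ stacked immediately above a $j+1$ cannot be part of the free block --- so that the free entries in adjacent rows occupy disjoint column ranges and can be rearranged independently. Once this combinatorial lemma about the local structure is nailed down, everything else is a direct and short verification, exactly in the spirit of Bender and Knuth's original argument.
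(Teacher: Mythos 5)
Your proposal is correct and is essentially identical to the paper's own (sketched) proof: both construct the classical Bender--Knuth involution $BK_j^{\text{A}}$ by freezing the $\{j,j\!+\!1\}$-vertical dominoes, swapping the word $j^a(j\!+\!1)^b$ of mutable entries in each row to $j^b(j\!+\!1)^a$, and concluding invariance under all of $S_n$ from the simple transpositions. Your ``locking'' observation and the contiguity of the free block are exactly the freezing/mutability argument of Bender and Knuth that the paper cites and sketches.
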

\begin{proof}[Sketch of proof.] We follow \cite{BenderKnuth}, \cite[Thm.~7.10.2]{StanleyEC2}.
    Let $(j~j\!+\!1)$ be a simple transposition.
    Given a tableau $T$ of shape $\lambda$ and weight $x^T$, we produce a tableau $BK_j^{\text{A}}(T)$ of shape $\lambda$ and weight 
    \[
    x^{BK_j^{\text{A}}(T)} = (j~j\!+\!1).x^T,
    \]
    where $S_n$ acts on $\CC[x_1, ..., x_n]$ by permuting the variables.
    To construct $\smash{BK^{\text{A}}_j}(T)$, begin by \emph{freezing} each pair of entries $\scriptsize\boxed{j}, \boxed{j+1}$ that lie in the same column of $T$ (these are usually referred to as $\{j, j+1\}$-vertical dominoes). The entries $\smash{\scriptsize\boxed{j}, \boxed{j+1}}$ of $T$ that are not yet frozen are called \emph{mutable}. For each row, the mutable entries of $T$ form a word $j^a(j+1)^b$, which is changed to a word $j^b(j+1)^a$ in $\smash{BK^{\text{A}}_j}(T)$.
    The map $BK_j^{\text{A}} : \SSYT_n(\lambda)\to \SSYT_n(\lambda)$ is an involution.
\end{proof}

We refer to the map $\smash{BK^{\text{A}}_j}$ defined in this proof as the $j$th \emph{type A Bender--Knuth involution}. See Figure \ref{fig: example BK}.

    \begin{figure}[h]
        \centering
        $\yTableauTwo
        \ytableaushort{1111222333444,222334444,3334}*[*(yellow)]{9+4,4+3,3}
        \mapsto
        \ytableaushort{1111222333334,222333444,4444}*[*(green!30)]{9+4,4+3,3}$
        \caption{We illustrate $\smash{BK^{\text{A}}_3}$. Highlighted, mutable entries of the tableau.}
        \label{fig: example BK}
    \end{figure}

The translation of type A Bender--Knuth involutions to GT patterns was studied in \cite{BK95} and can be described as follows: it only affects the $j$th row $P^{(j)} = (p_{i,j})_{i\le j}$, and sends each entry $p_{i,j}$ to
\[
p_{i,j}' := \min\{p_{i,j+1},p_{i-1,j-1}\} + \max\{p_{i+1,j+1},p_{i,j-1}\} - p_{i,j},
\]
where $\min$ and $\max$ ignore non-existing entries. That is,
\[
BK_j^\text{A}\left(P^{(n)}, ..., P^{(j)}, ..., P^{(1)}\right)
=
\left(P^{(n)}, ..., \big(p'_{1,j}, ..., p'_{j,j}\big), ..., P^{(1)}\right).
\]

\begin{ej}
To motivate this translation of $\smash{BK^{\text{A}}_j}$ to GT patterns, we take a closer look at the example of Figure \ref{fig: example BK}. Let $T$ be the tableau on the left, and let $P = ((p_{i,n})_{i\le n}, ..., (p_{i,1})_{i\le1})$ be its corresponding GT pattern. We consider the mutable $3^1 4^2$ word in the second row of $T$. It spans three columns, starting at (but not including) column 4 and ending at column 7.
\[
\yTableauTwo\ytableausetup{aligntableaux = top}
    \ytableaushort{1111222333444,222334444,3334}*[*(yellow)]{0,4+3}
\]
We can compute its starting column (in this case 4) as the maximum of (a) the number of entries smaller than 3 in the second row, $p_{2,2} = 3$, and (b) the number of entries smaller or equal to 4 in the third row, $p_{3,4} = 4$. Similarly, its ending column, 7, is the minimum of (c) the number of entries smaller than 3 in the first row, $p_{1,2} = 7$, and (d) the number of entries smaller or equal to 4 in the second row, $p_{2,4} = 9$.

Correspondingly, $p_{2,3}=5$ is sent by $\smash{BK^{\text{A}}_3}$ to $\min\{7,9\}+\max\{3,4\} - 5 = 6$.
\[
    \arraycolsep=0.1em\def\arraystretch{1}
    \scriptsize
    \begin{array}{CCCCCCC}
    13&&9&&4&&0\\
    &10&&\colorbox{yellow}{5}&&3&\\
    &&7&&3&&\\
    &&&4&&&
    \end{array}
    \mapsto
    \begin{array}{CCCCCCC}
    13&&9&&4&&0\\
    &12&&\colorbox{green!30}{6}&&0&\\
    &&7&&3&&\\
    &&&4&&&
    \end{array}
\]
\end{ej}

\noindent
We now show the analogue result for type C.

\begin{prop}\label{thm: bender-knuth C}
Symplectic polynomials in $n$ letters are $W(C_n)$-symmetric.
\end{prop}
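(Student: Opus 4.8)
The plan is to translate everything to King patterns and to exhibit, for each generator of $W(C_n)$, a weight-twisting involution on $\KSpP_n(\lambda)$. By the weight-preserving bijection $\KSpT_n(\lambda)\leftrightarrow\KSpP_n(\lambda)$ it is equivalent to show that $sp_\lambda$, viewed as the generating function of $\KSpP_n(\lambda)$, is invariant under the generators $s_0 := (1\ \bar1)$ and $s_j := (j~~j\!+\!1)(\shortoverline{j}~~\shortoverline{j\!+\!1})$, $j\in[n-1]$, of Section~\ref{sec: preliminary}. Since $x^P = x_1^{2S_1-S_2}\cdots x_n^{2S_{2n-1}-S_{2n}-S_{2n-2}}$, the generator $s_0$ inverts $x_1$ while $s_j$ swaps $x_j\leftrightarrow x_{j+1}$. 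For each generator I would construct an involution $BK_j^{\text C}$ of $\KSpP_n(\lambda)$ with $x^{BK_j^{\text C}(P)} = s_j.x^P$, which gives the claim.

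The sign-change generator $s_0$ only affects the counts of $1$ and $\bar1$, hence only the bottom of the pattern; under the order-preserving embedding into $[2n]$ it is the simple transposition swapping the letters $1$ and $\bar1$, so it is realized directly by the type A involution $BK_1^{\text A}$ of Proposition~\ref{lem: bender-knuth A} acting on the bottom row. This stays in $\KSpP_n(\lambda)$ and is an involution, so that generator is easy. For the $j$th adjacent swap, the same embedding turns $s_j$ into the block transposition $(2j\!-\!1~~2j\!+\!1)(2j~~2j\!+\!2)$ of $[2n]$, which has a reduced expression of length four in the adjacent transpositions $s_{2j-1},s_{2j},s_{2j+1}$. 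I would therefore define a first candidate as the corresponding composition of the type A involutions $BK_{2j-1}^{\text A}, BK_{2j}^{\text A}, BK_{2j+1}^{\text A}$; since each twists the type A weight by its adjacent transposition, the composite twists $x^P$ by exactly $x_j\leftrightarrow x_{j+1}$, as required. The catch is that this composite need not preserve the half-triangular vanishing $p_{ij}=0$ defining $\KSpP_n(\lambda)$, so I post-compose with a rectification map that restores this shape; checking that the rectification preserves the weight (equivalently, the relevant row sums $S_k$) is among the routine verifications. This defines $BK_j^{\text C}$.

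It remains to prove that each $BK_j^{\text C}$ is an involution, which is the crux. The key observation is locality: $BK_j^{\text C}$ modifies only rows $2j\!-\!1, 2j, 2j\!+\!1$, and its output depends only on these rows together with their neighbours $2j\!-\!2$ and $2j\!+\!2$, so the involution property is a statement about five consecutive rows with entries ranging over all values compatible with the GT and symplectic inequalities, insensitive to the rest of the pattern. Away from the boundary of the half-triangle the relevant five-row windows are all isomorphic, so a single \emph{generic} interior instance settles every interior $j$; the windows that touch the boundary are specializations of the generic one, obtained by forcing the relevant entries to their vanishing values, and are subsumed once the generic case is known. Thus it suffices to treat one generic interior instance, which is $BK_2^{\text C}$ on a pattern with $2n=6$ rows, i.e.\ $n=3$.

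I expect this final reduction to be the main obstacle. Because $BK_2^{\text C}$ is a composition of piecewise-linear (tropical) maps built from the $\min/\max$ formula for $BK_j^{\text A}$ and from the rectification, verifying $BK_2^{\text C}\circ BK_2^{\text C}=\mathrm{id}$ amounts to a case analysis over the outcomes of the various $\min/\max$ comparisons, and must be carried out carefully by hand; this is what I would defer to Section~\ref{sec: appendix bender-knuth}. I would not expect to shortcut it with computer algebra: while the associated rational map $\mathrm{Trop}^{-1}BK_2^{\text C}$ can be checked to be an involution by machine, it is not automatic that its order agrees with the order of the tropical map $BK_2^{\text C}$ (cf.\ \cite{grinbergRoby14}), so an honest direct computation seems unavoidable.
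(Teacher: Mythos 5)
Your proposal follows essentially the same route as the paper: realize $(1\ \bar1)$ by $BK_1^{\text{A}}$, realize $(j~j\!+\!1)(\shortoverline{j}~~\shortoverline{j\!+\!1})$ by the length-four composite of type A involutions $BK_{2j}^{\text{A}}, BK_{2j-1}^{\text{A}}, BK_{2j+1}^{\text{A}}, BK_{2j}^{\text{A}}$ post-composed with a weight-preserving rectification, then reduce by locality to the generic $n=3$, $j=2$ case and defer that computation to Section \ref{sec: appendix bender-knuth}. The only detail you gloss over is the content of Lemma \ref{lemma} --- that the failure of the symplectic condition is confined to the single entry $p_{j+1,2j}$, which is what makes the rectification map well-defined --- but otherwise the argument matches the paper's proof.
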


We begin by proposing a candidate for type C Bender--Knuth involutions. An involutory action of $(1~\bar{1})$ on $\KSpP_n(\lambda)\subseteq\GT_{2n}(\lambda)$ is given by $BK^\text{A}_1$.
For any other generator of $W(C_n)$, write this permutation as a product of simple transpositions
with respect to the ordered set $\A = \{1<\bar{1}<\cdots<n<\bar{n}\}$. We multiply permutations right-to-left, and thus we get
\[
(j~~j\!+\!1)(\overline{j}~~\overline{j\!+\!1}) = (\overline{j}~~ j\!+\!1)(j\!+\!1~~\overline{j\!+\!1})(j~~ \overline{j})(\overline{j}~~ j\!+\!1).
\]
For each of these, perform a type A involution. Starting with any given pattern $P_0$, we get
\begin{equation}\label{eq: composition BK}
\begin{tikzcd}[column sep = 5em]
P_0 \arrow[r,  "BK_{2j}^{\text{A}}", maps to] \arrow[r,  "(\overline{j}\ j+1)"', maps to] &
P_1 \arrow[r,  "BK_{2j-1}^{\text{A}}", maps to] \arrow[r,  "(j\ \overline{j})"', maps to] &
P_2 \arrow[r,  "BK_{2j+1}^{\text{A}}", maps to] \arrow[r,  "(j+1\ \overline{j+1})"', maps to] &
P_3 \arrow[r,  "BK_{2j}^{\text{A}}", maps to] \arrow[r,  "(\overline{j}\ j+1)"', maps to] &
P_4.
\end{tikzcd}
\end{equation}
Thanks to the properties of type A Bender--Knuth involutions, the resulting pattern $P_4$ is of weight $(j~~j\!+\!1)(\shortoverline{j}~~\shortoverline{j\!+\!1}).x^{P_0}$, as desired.
However, $P_4$ does not need to be symplectic: we might find an entry $p_{lk} \ne 0$ with $2l > k$.
\begin{lem}\label{lemma}
    Let $P_4$ be defined as above. Then $p_{lk} = 0$ for all $\{(l,k)\ : \ 2l>k\}-\{(j+1, 2j)\}$.
\end{lem}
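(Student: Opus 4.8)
The plan is to exploit the \emph{locality} of the type A involutions together with the explicit GT-pattern formula for $BK^\text{A}_k$. Since $BK^\text{A}_k$ alters only the $k$th row of a pattern, the four maps in \eqref{eq: composition BK} together touch only rows $2j-1$, $2j$ and $2j+1$, with row $2j$ recomputed twice. Hence every forbidden entry $p_{lk}$ (one with $2l>k$) whose row index $k$ is \emph{not} one of these three is left unchanged, and being zero in the symplectic pattern $P_0$ it stays zero in $P_4$. It therefore suffices to follow the finitely many forbidden entries sitting in rows $2j-1$, $2j$, $2j+1$ and to show each remains $0$ except possibly $p_{j+1,2j}$.

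First I would track these entries through $P_0\mapsto P_1\mapsto P_2\mapsto P_3\mapsto P_4$ by repeated substitution into
\[
p'_{i,k}=\min\{p_{i,k+1},p_{i-1,k-1}\}+\max\{p_{i+1,k+1},p_{i,k-1}\}-p_{i,k}.
\]
The governing observation is that rows $2j-2$ and $2j+2$ are never modified, so they keep their symplectic zeros throughout, and one of these zeros appears as an argument of the $\min$ whenever a forbidden entry of row $2j-1$ or $2j+1$ is recomputed. As all entries are non-negative, a single zero among the two arguments of the $\min$ forces it to vanish; the two arguments of the paired $\max$ are themselves forbidden entries (in unaltered or already-cleared rows) and hence both zero, so the recomputed entry is $0$. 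Applying the formula along the four steps, $BK^\text{A}_{2j}$ at step $1$ is the \emph{only} substitution in which both bounding neighbours of the relevant corner entry, namely $p_{j+1,2j+1}$ and $p_{j,2j-1}$, are mutable (neither lies in an unaltered row), so it is exactly here that a nonzero value may appear, at $p_{j+1,2j}$. Every other forbidden entry of rows $2j-1,2j,2j+1$ has at least one bounding neighbour in the unaltered rows $2j-2$ or $2j+2$, and is therefore pinned to $0$.

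The subtlety I would treat with care is that row $2j$ is recomputed twice, at steps $1$ and $4$, so the value created at $p_{j+1,2j}$ in step $1$ is used as a neighbour when rows $2j-1$ and $2j+1$ are updated in steps $2$ and $3$. I must verify that it cannot seed a fresh violation there; it cannot, because in each of those two computations the relevant $\min$ still contains a zero from rows $2j-2$ or $2j+2$, which caps the result back to $0$. Step $4$ then recomputes row $2j$ from the updated rows $2j\pm 1$: the entries of index $\ge j+2$ remain zero by the same capping, while $p_{j+1,2j}$ is once more the single unpinned position.

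The main obstacle is therefore not any delicate inequality but the bookkeeping: for each of the four substitutions one must correctly sort the $\min/\max$ neighbours into ``unaltered zero'' and ``mutable'', while keeping track of the two successive modifications of row $2j$. Once the position $(j+1,2j)$ is isolated as the unique place where both bounds are mutable, the statement follows from the trivial identities $\min\{a,0\}=0$ and $\max\{0,0\}=0$ for $a\ge 0$ applied at every remaining forbidden entry.
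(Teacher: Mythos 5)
Your proposal is correct and takes essentially the same route as the paper's own proof: locality (the four type~A moves touch only rows $2j$ and $2j\pm1$) combined with pinning every forbidden entry to zero through the update formula $\min\{\cdot,\cdot\}+\max\{\cdot,\cdot\}-\,\cdot\,$, whose relevant arguments are zeros inherited from the symplectic pattern $P_0$ or from already-cleared rows, leaving $(j+1,2j)$ as the unique unpinned position. If anything, your bookkeeping is more explicit than the paper's, which silently absorbs the two separate updates of row $2j$ and the fact that the possibly nonzero value created at $p_{j+1,2j}$ in the first step reappears as a neighbour in the subsequent updates of rows $2j\pm1$.
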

\begin{proof}
    The type A Bender--Knuth involutions from Equation \ref{eq: composition BK} only affect the rows $2j$ and $2j\pm 1$ of the pattern. 
    The value of an entry of $P_4$ in position $(l,k)$ with $2l>k$ and $k\not\in\{2j-1, 2j, 2j+1\}$ is thus 0, since $P_0$ is symplectic.
    
    Suppose $k=2j-1$. By the above, the value of $p_{lk}$ in the pattern $P_4$ is computed as
    $\min\{p_{l, k+1}, 0\} + \max\{0,0\} - 0$, which is 0. A similar argument applies to $k = 2j+1$.
    
    Let $k = 2j$, let $l > j+1$. By the above, the value of $p_{lk}$ is $\min\{0,0\}+\max\{0,0\}-0 = 0$. 
\end{proof}
That is, the only possible obstruction to the symplectic property is the value of entry $p_{j+1, 2j}$. We compose with the weight-preserving map $\rect$ (\emph{rectification}) that subtracts $p_{j+1,2j}$ from the entries $p_{j+1,2j}$, $p_{j+1,2j+1}$, $p_{j, 2j}$, and $p_{j, 2j-1}$, and is the identity everywhere else. Indeed, the fact that it is weight-preserving follows from the definition of weight of a pattern, where the variable $x_j$ is raised to the power $2S_{2j-1}-S_{2j}-S_{2j-2}$.

Define the $j$th type C Bender--Knuth involution as the composite \[
{BK^{\text{C}}_j := \rect \circ BK^{\text{A}}_{2j}
\circ BK^{\text{A}}_{2j+1}
\circ BK^{\text{A}}_{2j-1}
\circ BK^{\text{A}}_{2j}}.
\]

\begin{ej}\label{example patterns}
    Let $j=2$. We illustrate the 2nd type C Bender--Knuth involution on a symplectic pattern with 6 rows and top row $(3,3,2)$.
    \[
    \scriptsize\arraycolsep=0em\def\arraystretch{1}
    \begin{array}{CCCCCC|}
    3 & & 3 & & 2  &  \\
    & 3 & & 2 & & 0   \\
    & & \cellcolor{yellow}3 &\cellcolor{yellow} &\cellcolor{yellow} 0  & \cellcolor{yellow} \\
    & & & 2 & & 0   \\
    & & & & 1  &  \\
    & & & &  & 1  
    \end{array}
    \xrightarrow{\tiny(\bar{2}~3)}\!\!\!\!\begin{array}{CCCCCC|}
    3 & & 3 & & 2  &  \\
    & 3 & & 2 & & 0   \\
    & & \cellcolor{green!30}2 &\cellcolor{green!30} &\cellcolor{green!30} 2  & \cellcolor{green!30} \\
    & & & 2\cellcolor{yellow} & \cellcolor{yellow}& \cellcolor{yellow}0   \\
    & & & & 1  &  \\
    & & & &  & 1  
    \end{array}
    \xrightarrow{\tiny(2~\bar{2})}\!\!\!\!\begin{array}{CCCCCC|}
    3 & & 3 & & 2  &  \\
    & 3 \cellcolor{yellow}& \cellcolor{yellow}& 2 \cellcolor{yellow}& \cellcolor{yellow}& 0 \cellcolor{yellow}  \\
    & & 2 & & 2 &  \\
    & & & 2\cellcolor{green!30} & \cellcolor{green!30}& 1 \cellcolor{green!30}  \\
    & & & & 1  &  \\
    & & & &  & 1  
    \end{array}
    \xrightarrow{\tiny(3~\bar{3})}\!\!\!\!\begin{array}{CCCCCC|}
    3 & & 3 & & 2  &  \\
    & 3 \cellcolor{green!30}& \cellcolor{green!30}& 2\cellcolor{green!30} & \cellcolor{green!30}& 2\cellcolor{green!30}   \\
    & & \cellcolor{yellow}2 & \cellcolor{yellow} & \cellcolor{yellow}2 & \cellcolor{yellow}  \\
    & & & 2 & & 1   \\
    & & & & 1  &  \\
    & & & &  & 1  
    \end{array}
    \xrightarrow{\tiny(\bar{2}~3)}\!\!\!\!\begin{array}{CCCCCC|C}
    3 & & 3 & & 2  &  \\
    & 3 & & 2 & & 2   \\
    & & 3 \cellcolor{green!30}& \cellcolor{green!30}& 2 \cellcolor{green!30}&\cellcolor{green!30} & {\cellcolor{red!30}1} \\
    & & & 2 & & 1  \\
    & & & & 1  &  \\
    & & & &  & 1  
    \end{array}
    \xrightarrow{\tiny\rect}\!\!\!\!\begin{array}{CCCCCC|C}
    3 & & 3 & & 2  &  \\
    & 3 & & 2 & & \cellcolor{blue!20}1   \\
    & & 3 & & \cellcolor{blue!20}1 & & \cellcolor{blue!20}{\color{black!50}0}   \\
    & & & 2 & & \cellcolor{blue!20}0   \\
    & & & & 1  &  \\
    & & & &  & 1  
    \end{array}
    \]
    In the first step, we apply $BK^\text{A}_4$ (corresponding to $(\bar{2}~3)$). This acts on the 4th row of the pattern, sending $(3,0,0,0)$ to $(2,2,0,0)$. For instance, $3$ maps to $\min\{3\} + \max\{2,2\} - 3 = 2$. We keep doing this as indicated.

    On the fourth step, the 4th row $(2,2,0,0)$ is sent to $(3,2,1,0)$. Indeed, the first $0$ maps to $\min\{1,2\} + \max\{0,0\} - 0 = 1$. The array is no longer half-triangular. The rectification map corrects this by subtracting $1$ from the four highlighted entries of the last pattern.
\end{ej}

\begin{proof}[Proof of Proposition \ref{thm: bender-knuth C}.]
It suffices to show that each $BK^\text{C}_j$ is an involution. 
Consider \vspace{-1em}
\[\small
\begin{tikzcd}[column sep = -.8em, row sep = -0.5em]
\phantom{blahb}\Phi: &
P_0 & \xmapsto{BK_{2j}^{\text{A}}} 
& P_1 & \xmapsto{BK_{2j-1}^{\text{A}}}  
& P_2 & \xmapsto{BK_{2j+1}^{\text{A}}} 
& P_3 & \xmapsto{BK_{2j}^{\text{A}}}
& P_4 & =
& P'_5 & \xmapsto{BK_{2j}^{\text{A}}} 
& P'_6 & \xmapsto{BK_{2j-1}^{\text{A}}}  
& P'_7 & \xmapsto{BK_{2j+1}^{\text{A}}} 
& P'_8 & \xmapsto{BK_{2j}^{\text{A}}}
& P'_9 & \text{\, \, and}\\
(BK^\text{C}_j)^2: &
P_0 & \xmapsto{BK_{2j}^{\text{A}}} 
& P_1 & \xmapsto{BK_{2j-1}^{\text{A}}}  
& P_2 & \xmapsto{BK_{2j+1}^{\text{A}}} 
& P_3 & \xmapsto{BK_{2j}^{\text{A}}}
& P_4 & \xmapsto{\rect}
& P_5 & \xmapsto{BK_{2j}^{\text{A}}} 
& P_6 & \xmapsto{BK_{2j-1}^{\text{A}}}  
& P_7 & \xmapsto{BK_{2j+1}^{\text{A}}} 
& P_8 & \xmapsto{BK_{2j}^{\text{A}}}
& P_9 & \xmapsto{\rect} P_{10}.
\end{tikzcd}
\vspace{-.5em}
\]
Note that $\Phi$ is the identity, and that both maps are identical on most entries.
Indeed, in each step, the value of $BK^\text{A}_i$ on an entry only depends on the value of its four neighbours. Starting with the four entries of $P_5$ that are perturbed by $\rect$, the effect of this perturbation is only measured by the last two entries in rows $2j$ and $2j\pm 1$ of $P_{10}$.

Therefore, it is enough to show that $BK_{2}^{\text{C}}$ is an involution on a generic pattern with 6 rows.
Our strategy to tackle this final computation is to take the entry-wise differences  $P_i - P'_i$ for $i= 5, ..., 9$. We have $P_9' = P_0$, and $P_{10} - P_9'$ is seen to vanish in Section \ref{sec: appendix bender-knuth}.
\end{proof}

\begin{cor}
Orthogonal polynomials in $n$ letters are $W(B_n)$-symmetric.
\end{cor}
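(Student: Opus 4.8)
The plan is to deduce this directly from Proposition \ref{thm: bender-knuth C}, using the fact recorded in the preliminaries that $W(B_n)$ and $W(C_n)$ coincide as subgroups of $S_{2n}$, hence as permutation groups of the variables $x_1, \ldots, x_n$ (via the convention $x_{\bar{\imath}} = x_i^{-1}$). Because the two Weyl groups are literally the same group acting in the same way, the statement ``$o_\lambda$ is $W(B_n)$-symmetric'' is equivalent to ``$o_\lambda$ is $W(C_n)$-symmetric'', and it will suffice to exhibit $o_\lambda$ as a finite sum of symplectic polynomials, each of which is already known to be $W(C_n)$-invariant.

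First I would turn the bijection of Note \ref{note: type B, C as type A} into an identity of generating functions. Co-restriction to $\A$ sends an orthogonal tableau of shape $\lambda$ to a symplectic tableau of some shape $\mu$, where $\mu$ is obtained from $[\lambda]$ by deleting at most one (right-most) cell per row, and this is a bijection $\SOT_n(\lambda) \to \bigcup_\mu \KSpT_n(\mu)$. The key observation is that this bijection is weight-preserving: the only cells it deletes are the $\infty$-cells, which by the convention $x_\infty = 1$ contribute trivially to the weight, so $x^T = x^{T'}$ for $T' $ the co-restriction of $T$. Summing over $\SOT_n(\lambda)$ and grouping by the value of $\mu$ then yields
\[
o_\lambda = \sum_\mu sp_\mu,
\]
where $\mu$ ranges over the partitions obtainable from $\lambda$ by removing at most one cell in each row.

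With this decomposition the corollary is immediate: Proposition \ref{thm: bender-knuth C} places each $sp_\mu$ in $\CC[x_1^\pm, \ldots, x_n^\pm]^{W(C_n)}$, and a finite sum of $W(C_n)$-invariant Laurent polynomials is again $W(C_n)$-invariant, i.e.\ $W(B_n)$-invariant. I do not anticipate any genuine obstacle here; no new involution need be built for type B, and the entire content beyond Proposition \ref{thm: bender-knuth C} is formal. The single point requiring care is the weight-preservation of the bijection in Note \ref{note: type B, C as type A}, which rests precisely on the fact that the deleted $\infty$-cells are invisible to the weight.
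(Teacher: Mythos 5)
Your proposal is correct and matches the paper's own proof essentially verbatim: the paper likewise invokes $W(B_n)=W(C_n)$, the weight-preserving bijection of Note \ref{note: type B, C as type A} (stated there at the level of patterns, $\SOP_n(\lambda)\to\bigcup_\mu\KSpP_n(\mu)$, rather than tableaux) to obtain $o_\lambda = \sum_\mu sp_\mu$, and then Proposition \ref{thm: bender-knuth C}. Your added remark that weight-preservation rests on the convention $x_\infty = 1$ is a correct and harmless elaboration of a point the paper leaves implicit.
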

\begin{proof}
We have $W(B_n) = W(C_n)$. 
From the weight-preserving bijection $\SOP_n(\lambda) \to \bigcup_\mu\KSpP_n(\mu)$, we get
\(
o_\lambda =
\sum_\mu sp_\mu.
\)
The result now follows from Proposition \ref{thm: bender-knuth C}.
\end{proof}

Combinatorially, type B Bender--Knuth involutions are defined on patterns by ignoring the circles and performing type C Bender--Knuth involutions, and later placing the circles back to where they were.\\

To describe type B and C Bender--Knuth involutions on tableaux, it remains to interpret $\rect$. Consider a tableau $T_0$ and the composite
\begin{equation*}
\begin{tikzcd}[column sep = 5em]
T_0 \arrow[r,  "BK_{2j}^{\text{A}}", maps to] \arrow[r,  "(\overline{j}\ j+1)"', maps to] &
T_1 \arrow[r,  "BK_{2j-1}^{\text{A}}", maps to] \arrow[r,  "(j\ \overline{j})"', maps to] &
T_2 \arrow[r,  "BK_{2j+1}^{\text{A}}", maps to] \arrow[r,  "(j+1\ \overline{j+1})"', maps to] &
T_3 \arrow[r,  "BK_{2j}^{\text{A}}", maps to] \arrow[r,  "(\overline{j}\ j+1)"', maps to] &
T_4.
\end{tikzcd}
\end{equation*}
Lemma \ref{lemma} says $T_4$ is symplectic up to the existence of $\{j, \bar{\jmath}\}$-vertical dominoes between rows $j$ and $j+1$. (For a proof of the lemma in the language of tableaux see \cite[Prop.~5.9]{GutiTFM}.) The tableau $\rect(T_4)$ is constructed from $T_4$ by relabelling such dominoes into $\{j+1, \shortoverline{j+1}\}$-vertical dominoes, and sorting rows $j$ and $j+1$ into increasing order.

\begin{ej} Let $j=2$.
    We translate Example \ref{example patterns} to tableaux.
    \[
    \yTableau
    BK^\text{C}_2 : \,
    \ytableaushort{12{\bar{2}},33{\bar{3}},{\bar{3}}{\bar{3}}}*[*(yellow)]{2+1,2} \xmapsto{(\bar{2}\ 3)}
    \ytableaushort{123,{\bar{2}}{\bar{2}}{\bar{3}},{\bar{3}}{\bar{3}}}*[*(yellow)]{0,1}\xmapsto{(2\ \bar{2})}
    \ytableaushort{123,2{\bar{2}}{\bar{3}},{\bar{3}}{\bar{3}}}*[*(yellow)]{0,0,2}\xmapsto{(3\ \bar{3})}
    \ytableaushort{123,2{\bar{2}}{\bar{3}},33}*[*(yellow)]{2+1,0,1}\xmapsto{(\bar{2}\ 3)}
    \ytableaushort{12{\bar{2}},2{\bar{2}}{\bar{3}},{\bar{2}}3}*[*(red!30)]{0,1,1}\xmapsto{\rect}
    \ytableaushort{12{\bar{2}},{\bar{2}}3{\bar{3}},3{\bar{3}}}*[*(blue!20)]{0,1+1,1+1}
    \]
    Each of the four first maps are type A Bender--Knuth involutions, and the last map rectifies the tableau by getting rid of the highlighted $\{2,\bar{2}\}$-vertical domino.
\end{ej}
\begin{cor}
    Both symplectic polynomials and orthogonal polynomials (on $n$ letters) form an integral basis of $\Z[x_1^\pm, ..., x_n^\pm]^{S_2\wr S_n}$.
\end{cor}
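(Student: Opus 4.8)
The plan is to show two things for each of the families $\{sp_\lambda\}_{\lambda\in\Par_n}$ and $\{o_\lambda\}_{\lambda\in\Par_n}$: first, that they lie in the ring $R := \Z[x_1^\pm, \dots, x_n^\pm]^{S_2\wr S_n}$, and second, that they form a $\Z$-basis of it. Membership in $R$ is exactly the content of Proposition \ref{thm: bender-knuth C} and its corollary once we note that the generating functions have integer (in fact non-negative integer) coefficients by construction, since each is a sum of monomials $x^T$ over a finite set of tableaux; the $W(C_n) = W(B_n) = S_2\wr S_n$ invariance is precisely what was just proved. So the only real work is the basis claim, and the natural approach is a triangularity argument against a known $\Z$-basis of $R$.

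First I would fix a $\Z$-basis of $R$ to compare against. The standard choice is the orbit-sum (``monomial'') basis: for each dominant weight $\lambda\in\Par_n$ (identified with the cone of $S_2\wr S_n$-dominant elements of $\Z^n$, i.e. $\lambda_1\ge\cdots\ge\lambda_n\ge 0$), let $m^{B}_\lambda := \sum_{\mu\in W.\lambda} x^\mu$ be the sum over the Weyl-group orbit of $x^\lambda$. These are manifestly $W$-invariant, are indexed by $\Par_n$, and a routine check shows they form a $\Z$-basis of $R$ (every $W$-invariant Laurent polynomial is a unique $\Z$-combination of orbit sums, since the orbits partition the monomials). The key step is then to introduce the dominance order on $\Par_n$ and to prove that each symplectic polynomial expands unitriangularly against this basis:
\begin{equation*}
sp_\lambda = m^{B}_\lambda + \sum_{\mu < \lambda} c_{\lambda\mu}\, m^{B}_\mu, \qquad c_{\lambda\mu}\in\Z_{\ge 0}.
\end{equation*}
The leading term $m^B_\lambda$ appears with coefficient $1$ because there is a unique symplectic tableau of shape $\lambda$ and highest weight $x^\lambda$, namely the one with every entry in row $i$ equal to $i$ (the constraint $T(i,j)\ge i$ forces this, and it is the weight-maximal element of the orbit); all other tableaux contribute monomials $x^\alpha$ whose dominant representative $\mu$ is strictly smaller than $\lambda$ in dominance order. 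Unitriangularity over $\Z$ immediately gives that $\{sp_\lambda\}$ is a $\Z$-basis of $R$.

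For the orthogonal polynomials the corollary's proof already records the expansion $o_\lambda = \sum_\mu sp_\mu$, where $\mu$ ranges over partitions obtained from $\lambda$ by removing at most one cell per row; since $\lambda$ itself occurs (remove no cells) and every other $\mu$ satisfies $\mu\subsetneq\lambda$ hence $\mu<\lambda$ in dominance, this is again a unitriangular change of basis with integer coefficients from $\{sp_\mu\}$ to $\{o_\lambda\}$. Composing the two unitriangular transitions shows $\{o_\lambda\}$ is likewise a $\Z$-basis of $R$. The step I expect to be the main obstacle is the leading-term/triangularity analysis: one must verify cleanly that the dominant representative of the weight of any tableau of shape $\lambda$ is dominance-bounded by $\lambda$, and that $\lambda$ is attained uniquely. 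This is the combinatorial heart and requires care with the signed alphabet $\A$ and the diagonal constraint $T(i,j)\ge i$; everything else (that orbit sums form a basis, that the coefficients are integers, that unitriangular matrices over $\Z$ are invertible over $\Z$) is formal.
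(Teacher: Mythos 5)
Your overall strategy---expand against a distinguished basis and argue triangularity---is the same kind of argument as the paper's, but you route it through more machinery than the paper uses, and the one step you explicitly defer is exactly where your version becomes harder than necessary. The paper's proof is a direct leading-term elimination: order $\Par_n$ \emph{lexicographically}, observe that any nonzero $f\in\Z[x_1^\pm,\dots,x_n^\pm]^{S_2\wr S_n}$ has a lex-greatest partition exponent $\lambda$ with $c_\lambda\neq 0$ (every monomial orbit contains a partition representative), and check that $f-c_\lambda sp_\lambda$ has strictly smaller leading term; well-foundedness of the lex order gives spanning, and the same leading-term observation gives independence. No orbit-sum basis and no dominance order appear. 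The triangularity needed there is the \emph{lex} analogue of your lemma, and in lex order it is nearly immediate from the King condition: the letters $1,\bar 1$ can only occupy row $1$ (all other rows have entries $\geq 2$), so the exponent of $x_1$ is at most $\lambda_1$; if it equals $\lambda_1$, then row $1$ is filled with $1$'s, and one inducts on the remaining rows. The same induction shows the coefficient of $x^\lambda$ is exactly $1$. Your dominance statement is strictly stronger (for partitions, the partial-sum inequalities imply lex comparability), so you are committing to proving more than is needed.

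As written, the deferred lemma is a genuine gap, and it is the crux: your parenthetical ``the constraint $T(i,j)\geq i$ forces this'' justifies only the uniqueness of the tableau of weight $x^\lambda$, not the bound on all other tableaux, and the bound does \emph{not} follow from the inclusion into semistandard tableaux on $2n$ letters (that only bounds sums of $k$ of the $|e_i|$ by $\lambda_1+\cdots+\lambda_{2k}$). The claim is nevertheless true, and here is a short proof you could insert. Write $e_i$ for the exponent of $x_i$ in $x^T$, i.e.\ $\#\{\text{entries } i\}-\#\{\text{entries } \bar\imath\}$. Given $S\subseteq[n]$ with $|S|=k$, choose for each $i\in S$ the letter $c_i:=i$ if $e_i\geq 0$ and $c_i:=\bar\imath$ otherwise, and set $C=\{c_i: i\in S\}$; then $\sum_{i\in S}|e_i|$ is at most the number of cells of $T$ whose entry lies in $C$. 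Since entries are strictly increasing down columns, each column contains at most $\min(k,\ell_j)$ such cells, where $\ell_j$ is the length of column $j$; summing over columns gives $\sum_j\min(k,\ell_j)=\lambda_1+\cdots+\lambda_k$, which is the dominance bound. With this lemma, plus the coefficient-$1$ statement proved by the row-by-row induction above, your argument is complete: the orbit sums do form a $\Z$-basis, unitriangular matrices (with respect to a well-founded order, with finite columns) invert over $\Z$, and your treatment of $o_\lambda=\sum_\mu sp_\mu$ with $\mu\subseteq\lambda$ matches the paper's reduction of the orthogonal case to the symplectic one.
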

\begin{proof}
    Let $\le$ be the lexicographic order on $\Par_n$. 
    Let $f = \sum_\alpha c_\alpha x^\alpha \in \Z[x_1^\pm, ..., x_n^\pm]^{S_2\wr S_n}$.
    Define the leading term of $f$ as the greatest $\lambda$ such that $c_\lambda\ne0$. Then $f - c_\lambda sp_\lambda$ has a lower leading term. Thus $\{sp_\lambda\}_{\lambda\in\Par_n}$ is spanning and linearly independent. Similarly for $\{o_\lambda\}_{\lambda\in\Par_n}$.
\end{proof}
In particular, this shows that symplectic and orthogonal polynomials are integral linear combinations of Weyl characters of representations of $\Sp_{2n}$ and $\SO_{2n+1}$, respectively.

\section{A computation}
\label{sec: appendix bender-knuth}

To complete the proof of Proposition \ref{thm: bender-knuth C}, we need to verify that the proposed map $BK_2^{\text{C}}$ is an involution on the set of symplectic patterns with 6 rows and fixed shape.

To alleviate notation, we consider a pattern $a = (a^{(6)}, ..., a^{(1)})$, and denote with $b, c, d, e, f$ the image of $a$ under the following composite maps:
\[
\begin{tikzcd}[column sep = 3em]
a \arrow[r,  "BK_{4}^{\text{A}}", maps to] \arrow[r,  "(\bar{2}~3)"', maps to] & b \arrow[r,  "BK_3^{\text{A}}", maps to] \arrow[r,  "(2~\bar{2})"', maps to] & c \arrow[r,  "BK_5^{\text{A}}", maps to] \arrow[r,  "(3~\bar{3})"', maps to] & d \arrow[r,  "BK_4^{\text{A}}", maps to] \arrow[r,  "(\bar{2}~3)"', maps to] & e \arrow[r,  "\rect", maps to] & f.
\end{tikzcd}
\]
Moreover, we set $A$ as a copy of $e$, and $A'$ as a copy of $f$, and define $B, B', ..., F'$ as follows:
\[
\arraycolsep=0.05em
\begin{array}{rccccccccccccccccccccc}
\Phi : &a & \xmapsto{(\bar{2}~3)} & b & \xmapsto{(2~\bar{2})} & c & \xmapsto{(3~\bar{3})} &  d & \xmapsto{(\bar{2}~3)} & e & =: & A & \xmapsto{(\bar{2}~3)} & B & \xmapsto{(2~\bar{2})} & C & \xmapsto{(3~\bar{3})} & D & \xmapsto{(\bar{2}~3)} & E & = & a,\\
(BK_2^{\text{C}})^2 : & a & \xmapsto{(\bar{2}~3)} & b & \xmapsto{(2~\bar{2})} & c & \xmapsto{(3~\bar{3})} & d & \xmapsto{(\bar{2}~3)} & e & \xmapsto{\rect} & A' & \xmapsto{(\bar{2}~3)} & B' & \xmapsto{(2~\bar{2})} & C' & \xmapsto{(3~\bar{3})} & D' & \xmapsto{(\bar{2}~3)} & E' & \xmapsto{\rect} & F'.
\end{array}
\]
We have $E=a$ as noted in Section \ref{sec: patterns}, and we aim to show $F' = a$.

Let us start by comparing $A$ and $A'$. We have $A'_{jk} = A_{jk}$ for all $j, k$ except for
\[
A'_{35} = A_{35}-e_{34}, \quad
A'_{24} = A_{24}-e_{34}, \quad
A'_{23} = A_{23}-e_{34}, \quad\text{and}\quad
A'_{34} = A_{34}-e_{34} = 0.
\]
We may now turn to $B$ and $B'$, in which we thus find
\[
B'_{14} = B_{14}, \quad
B'_{24} = B_{24}, \quad\text{and}\quad
B'_{34} = B_{34}.
\]
Indeed, we have
\begin{align*}
    B'_{24} &= \min\{A'_{25}, A'_{13}\} + \max\{A'_{35},A'_{23}\} - A'_{24} \\
    & = \min\{A_{25}, A_{13}\} + \max\{A_{35}-e_{34},A_{23}-e_{34}\} - (A_{24} - e_{34}) = B_{24}, \text{ and}\\
    B'_{34} &= \min\{A_{35} - e_{34}, A_{23} - e_{34}\} \\
    & = \min\{A_{35}, A_{23}\} - e_{34} \\
    & = \min\{A_{35}, A_{23}\} - A_{34} = B_{34}.
\end{align*}
In the next step, when comparing $C$ and $C'$, we therefore note
\begin{align*}
    C'_{13} & = C_{13},
    \text{ and}\\
    C'_{23} &= \min\{B_{24},B_{12}\} + B_{34} - B_{23}'\\
    &= \min\{B_{24},B_{12}\} + B_{34} - A_{23}'\\
    &= \min\{B_{24},B_{12}\} + B_{34} - (A_{23}-e_{34}) = C_{23} + e_{34}.
\end{align*}
Similarly, in $D$, $D'$,
\[
D'_{15} = D_{15},
\quad
D'_{25} = D_{25},
\quad\text{and}\quad
D'_{35} = D_{35} + e_{34}.
\]
Finally, comparing $E$ and $E'$ gives
\[
E'_{14} = E_{14},
\quad
E'_{24} = E_{24} + e_{34},
\quad\text{and}\quad
E'_{34} = E_{34} + e_{34} = a_{34} + e_{34} = e_{34}.
\]
And now, subtracting $e_{34}$ from $E'_{34}$, $E'_{24}$, $D'_{35}$ and $C'_{23}$ recovers the pattern $E$. This shows $F' = E = a$, as desired.

\begin{note}
    \label{note:summary}
Just for illustrative purposes, we give explicitly give the patterns $A', B', ..., F'$ in terms of $A, B, ..., C$ according to the computations above. To save space, we denote $x-e_{34}$ by $x^-$ and $x+e_{34}$ by $x^+$. 
\begin{gather*}
\scriptsize
\arraycolsep=0.1em\def\arraystretch{1}
\newcolumntype{D}{>{\centering\arraybackslash$} p{1.5em} <{$}}
A'=\!\!\!\!
\begin{array}{CCCCCCCCCCC}
    A_{16} & & A_{26} & & A_{36} & & 0 & & 0  & & 0\\
    & A_{15} & & A_{25} & & A_{35}^- & & 0 &  & 0 & \\
    & & A_{14} & & A_{24}^- & & A_{34}^- & & 0  & & \\
    & & & A_{13} & & A_{23}^- & & 0 &  & & \\
    & & & & A_{12} & & 0 & &  & & \\
    & & & & & A_{11} & & & & & 
\end{array}
   \quad \xmapsto{(\bar{2}\ 3)} \quad
B'=\!\!\!\!
\begin{array}{CCCCCCCCCCC}
    B_{16} & & B_{26} & & B_{36} & & 0 & & 0  & & 0\\
    & B_{15} & & B_{25} & & B_{35}^- & & 0 &  & 0 & \\
    & & B_{14} & & B_{24} & & B_{34} & & 0  & & \\
    & & & B_{13} & & B_{23}^- & & 0 &  & & \\
    & & & & B_{12} & & 0 & &  & & \\
    & & & & & B_{11} & & & & & 
\end{array}
\\
\scriptsize
\arraycolsep=0.1em\def\arraystretch{1}
\xmapsto{(2\ \bar{2})} \quad
C'=\!\!\!\!
\begin{array}{CCCCCCCCCCC}
    C_{16} & & C_{26} & & C_{36} & & 0 & & 0  & & 0\\
    & C_{15} & & C_{25} & & C_{35}^- & & 0 &  & 0 & \\
    & & C_{14} & & C_{24} & & C_{34} & & 0  & & \\
    & & & C_{13} & & C_{23}^+ & & 0 &  & & \\
    & & & & C_{12} & & 0 & &  & & \\
    & & & & & C_{11} & & & & & 
\end{array}
   \quad \xmapsto{(3\ \bar{3})} \quad
D'=\!\!\!\!
\begin{array}{CCCCCCCCCCC}
    D_{16} & & D_{26} & & D_{36} & & 0 & & 0  & & 0\\
    & D_{15} & & D_{25} & & D_{35}^+ & & 0 &  & 0 & \\
    & & D_{14} & & D_{24} & & D_{34} & & 0  & & \\
    & & & D_{13} & & D_{23}^+ & & 0 &  & & \\
    & & & & D_{12} & & 0 & &  & & \\
    & & & & & D_{11} & & & & &
\end{array}
\\
\scriptsize
\arraycolsep=0.1em\def\arraystretch{1}
\xmapsto{(3\ \bar{2})} \quad
E'=\!\!\!\!
\begin{array}{CCCCCCCCCCC}
    E_{16} & & E_{26} & & E_{36} & & 0 & & 0  & & 0\\
    & E_{15} & & E_{25} & & E_{35}^+ & & 0 &  & 0 & \\
    & & E_{14} & & E_{24}^+ & & E_{34}^+ & & 0  & & \\
    & & & E_{13} & & E_{23}^+ & & 0 &  & & \\
    & & & & E_{12} & & 0 & &  & & \\
    & & & & & E_{11} & & & & &
\end{array}
   \quad \xmapsto{\rect} \quad
F' = \!\!\!\!
\begin{array}{CCCCCCCCCCC}
    E_{16} & & E_{26} & & E_{36} & & 0 & & 0  & & 0\\
    & E_{15} & & E_{25} & & E_{35} & & 0 &  & 0 & \\
    & & E_{14} & & E_{24} & & E_{34} & & 0  & & \\
    & & & E_{13} & & E_{23} & & 0 &  & & \\
    & & & & E_{12} & & 0 & &  & & \\
    & & & & & E_{11} & & & & &
\end{array}
\end{gather*}
\end{note}


\section*{Acknowledgements}
I thank my MSc advisors Catharina Stroppel and Jacob Matherne, and my PhD advisor Mark Wildon, for helping with the construction of this text.
I am indebted to my friends Giulio Ricci, Barbara Muniz, and Chris Mili\'{o}nis for their enriching conversations.

\newpage
\section*{Addendum}

In Page 3, paragraph 4, we sketch a potential proof of Proposition \ref{thm: bender-knuth C} without the computation of $\S\ref{sec: appendix bender-knuth}$. We furthermore mention that there is a gap in the sketch. It was pointed out by Darij Grinberg \cite{GrinbergMO} that the gap is bypassed by \cite{GrinbergRoby}, giving a second proof of Proposition 4.3.\medskip

The first part is to \emph{detropicalize} the expression for Bender--Knuth involutions on GT patterns. Detropicalization refers to a map of semifields that can be described in layman terms as follows: starting with an expression in terms of ``$\max$''s and sums, change each instance of ``$\max$'' for a sum and each sum for a product. The result will be a rational function. For instance, the detropicalization of the Bender--Knuth map on GT patterns
\[
p_{i,j} \mapsto 
\min\{p_{i,j+1},p_{i-1,j-1}\} + \max\{p_{i+1,j+1},p_{i,j-1}\} - p_{i,j},
\]
is the rational map
\[
x_{i,j} \mapsto \frac{(x_{i,j+1}+x_{i-1,j-1})}{\left(x_{i+1,j+1}^{-1}+x_{i,j-1}^{-1}\right)\cdot x_{i,j}}.
\]
The detropicalization of the rectification map
\[
p \mapsto p - p_{j+1,2j}
\]
is the rational map
\[
x \mapsto \frac{x}{x_{j+1,2j}},
\]
where $p$ is one of $p_{j+1,2j}$, $p_{j+1,2j+1}$, $p_{j, 2j}$, and $p_{j, 2j-1}$, and similarly for $x$.

Combining these maps, we obtain a detropicalized type C Bender--Knuth map $\text{Trop}^{-1} BK^\text{C}_2$.
This map can be easily implemented into a symbolic algebra software; we used SageMath \cite{Sage}. It is then possible to check that $\text{Trop}^{-1} BK^\text{C}_2$ on a pattern of order 6 is an involution (as a rational function).

The second part of this argument is due to Darij Grinberg and goes as follows.
All it remains is to show that the order of $BK^\text{C}_2$ divides the order of $\text{Trop}^{-1} BK^\text{C}_2$. And this is clear as a general statement, since this is just saying that $f^2 = \mathrm{id}$ for all semifields (the $2$ here is the order of the detropicalization) implies $f^2 = \mathrm{id}$ for the tropical semifield.
See \cite[Rmk.~10, reason 2]{GrinbergRoby} for details.\medskip

Darij pointed out in private communication that the SageMath computation is not crucial: the computation of $\S\ref{sec: appendix bender-knuth}$ lifts to the detropicalization. This is clear from the picture of Note \ref{note:summary}, which lifts to a similar picture in which $x^+$ stands for $x\cdot e_{34}$ and $x^{-}$ for $x/e_{34}$ instead.

\end{document}